\newtheorem{theorem}{Theorem}[section]
\newtheorem{definition}[theorem]{Definition}
\newtheorem{proposition}[theorem]{Proposition}
\newtheorem{remark}[theorem]{Remark}
\newcommand{\supp}{{\rm supp}}
\title{Convergence speed of weighted Bergman kernels towards extremal functions}
\author{Guokuan SHAO}
\begin{document}

\maketitle

\begin{abstract}
We construct inner products by the Bernstein-Markov inequality on spaces of holomorphic sections of high powers of a line bundle. 
The corresponding weighted Bergman kernel functions converge to an extremal function.
We obtain a uniform convergence speed.
\end{abstract}

\noindent
{\bf Classification AMS 2010:} 32A36, 32L10, 32Q15, 32U15.

\noindent
{\bf Keywords: } Bergman kernel, extremal function, positive line bundle, holomorphic section.

\section{Introduction}
In this paper we study the convergence speed when the weighted Bergman kernel functions for high powers of a line bundle converge to an extremal function. Let $(L, h)\rightarrow (X, \omega)$ be a positive line bundle over a projective manifold $X$ of dimension $m$,
where $h$ is a smooth Hermitian metric of  $L$, $\omega$ is the K\"ahler form of $X$. Denote by $H^{0}(X, L)$ the space of all global holomorphic sections of $L$.
A natural inner product on $H^{0}(X, L)$ is induced by $h$ and $\omega$.
Let $B_{n}$ be the Bergman kernel for $H^{0}(X, L^{n})$. Let $\Psi_{n}: X\rightarrow\mathbb{P}H^{0}(X, L)$ be the Kodaira embedding map when $n$ is large. Denote by $\omega_{FS}$ the Fubini-Study form of complex projective spaces.
A famous theorem by Tian-Zelditch \cite{zs} states that $\dfrac{1}{n}\Psi_{n}^{\star}(\omega_{FS})$ converges uniformly to $\omega$ in $C^{\infty}$
topology. Alternatively we have $\frac{1}{n}\log B_{n}$ converges uniformly to $0$ on $X$.
Since then the convergence of Bergman kernels have been studied actively, see \cite{dlm, hc, hm, mm1} and the references therein.
Many works extended the results in more general settings, e.g. the line bundle is big with singular metrics,
the base space is normal K\"ahler complex space etc, see \cite{cmm, dmm}.
One of the applications of these results is to explore the zeros of random holomorphic sections of line bundles. For related works, see \cite{cm, cmn, ds1, sh1, sh2, sz} etc.

Bloom-Shiffman \cite{bs} considered a Bergman kernel induced by a new inner product. They used Bernstein-Markov inequalities to construct the new inner products. The limit of
$\dfrac{1}{2n}\log B_{n}$ is the Siciak's extremal function \cite{sj}.
The setting of the work is based on the space of homogeneous polynomials on complex vector spaces.
Now we generalize the setting to the case of positive line bundles with a weighted function and obtain a uniform convergence speed.

Some basic settings are the following：\\
\emph{I: Let $(L, h)\rightarrow (X, \omega)$ be a positive line bundle over a projective manifold $X$ of dimension $m$.
The smooth Hermitian metric $h$ of  $L$ induces the K\"ahler form $\omega$ of $X$.}\\
\emph{II:The Bergman kernel function $B_{n}(x)$ for $H^{0}(X, L^{n})$ is defined by (3).
Note that in the Bernstein-Markov inequality, we have the condition $M_{n}\leq n^{C_{0}}$ (cf. Definition 3.1).}\\
\emph{III: The extremal function $V(x)$ is defined by (1).}\\

Here is our main theorem.
\begin{theorem}
With the above assumptions, we have uniformly
\begin{equation*}
|\frac{1}{2n}\log B_{n}(x)-V(x)|=O(\frac{\log n}{n}).
\end{equation*}
\end{theorem}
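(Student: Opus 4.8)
The plan is to establish the estimate as a two-sided inequality, obtaining the upper bound directly from the Bernstein-Markov inequality and the lower bound from an $L^2$ construction of peak sections. The starting point in both directions is the variational characterization of the weighted Bergman kernel,
\begin{equation*}
B_n(x)=\sup_{0\neq s\in H^0(X,L^n)}\frac{|s(x)|^2_{h^n}}{\|s\|_n^2},
\end{equation*}
where $\|\cdot\|_n$ is the inner-product norm of Definition 3.1, together with the defining supremum (1) for $V$, taken over sections of sup-norm at most $1$; in particular $\frac1n\log|s(x)|_{h^n}\le V(x)$ whenever $\sup_X|s|_{h^n}\le 1$. For the upper bound, let $s$ be a maximizer with $\|s\|_n=1$, so $B_n(x)=|s(x)|^2_{h^n}$. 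Normalizing, $s/\|s\|_\infty$ competes in (1), whence $|s(x)|_{h^n}\le\|s\|_\infty\,e^{nV(x)}$, and the Bernstein-Markov inequality $\|s\|_\infty\le M_n\|s\|_n=M_n\le n^{C_0}$ gives $B_n(x)\le M_n^2 e^{2nV(x)}$. Thus $\frac{1}{2n}\log B_n(x)\le V(x)+\frac{C_0\log n}{n}$ uniformly in $x$, and this is precisely where the polynomial growth condition $M_n\le n^{C_0}$ enters.

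For the lower bound I would first pass to the sup-norm extremal function $T_n(x):=\sup_{\|s\|_\infty\le 1}|s(x)|_{h^n}$. Choosing a competitor $s$ with $\|s\|_\infty\le 1$ that nearly realizes $T_n(x)$ and using $\|s\|_n^2=\int_X|s|^2_{h^n}\,d\mu\le\mu(X)\|s\|_\infty^2$, the variational formula yields
\begin{equation*}
\frac{1}{2n}\log B_n(x)\ge\frac1n\log T_n(x)-\frac{\log\mu(X)}{2n}=\frac1n\log T_n(x)-O(1/n).
\end{equation*}
Since $\frac1n\log T_n(x)\le V(x)$ holds trivially, it remains to prove the matching estimate $\frac1n\log T_n(x)\ge V(x)-O(\log n/n)$, i.e.\ that the sup-norm extremal sections approach the envelope $V$ at the stated rate.

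This last inequality is the crux, and I expect it to be the main obstacle. A purely soft argument is insufficient: the sections $s\otimes t$ show that $n\mapsto\log T_n(x)$ is superadditive, so Fekete's lemma gives $\frac1n\log T_n(x)\to V(x)=\sup_n\frac1n\log T_n(x)$, but with no rate. To inject a rate I would construct, for each large $n$ and each $x$, a concentrated peak section $s_n\in H^0(X,L^n)$ with $\|s_n\|_\infty\le 1$ and $|s_n(x)|_{h^n}\ge n^{-O(1)}e^{nV(x)}$. The natural tool is an $L^2$ existence or extension theorem (Hörmander's estimate, or Ohsawa-Takegoshi applied at the single point $x$) run with the extremal weight $h^n e^{-2nV}$, whose curvature current is nonnegative because $V$ solves the extremal envelope problem attached to (1). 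The polynomial factor $n^{-O(1)}$ arises from the extension constant together with the local $L^2$ mass of a section concentrated at the natural scale $n^{-1/2}$ about $x$; after applying $\frac1n\log$ this is exactly of order $\log n/n$.

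The delicate points are two. First, $V$ is in general only $C^{1,1}$, so the weight $e^{-2nV}$ is not smooth and the $L^2$ estimates must be carried out against a regularization of $V$, with the regularization error accounting for part of the $\log n/n$ loss; controlling this requires the boundedness of $V$ and of the second-order data of its regularizations, which the positivity of $(L,h)$ supplies. Second, the estimate must be uniform in $x\in X$, so the extension constant and the local volume comparisons must be bounded independently of the base point. Here the compactness of $X$ and the uniform positivity of the curvature of $L$ give uniform control of the geometry, and it is the careful tracking of these constants—rather than any single inequality—that I anticipate to be the technical heart of the argument. Combining the uniform lower bound with the upper bound from the first paragraph then gives the claimed $O(\log n/n)$ rate.
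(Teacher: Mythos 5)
Your architecture coincides with the paper's: a first step comparing $B_n$ with a sup-norm extremal function up to $n^{O(1)}$ factors (the paper's Proposition 4.1 — your variational characterization of $B_n$ is in fact a cleaner route to its lower inequality, avoiding the reproducing-kernel computation and the extra $d_n$ factor), and a second step establishing the rate for the extremal function itself by a Demailly-style peak-section construction with $L^2$-estimates, which is exactly how the paper proceeds. However, there is a concrete slip in your normalization. You define $T_n(x)$ by the condition $\sup_X|s|_{h^n}\le 1$ and assert that $\frac1n\log T_n\le V$ holds trivially; with $V=V_{K,q}$ this is false unless $q\ge 0$, since the competitor class in (1) requires $\varphi\le q$ \emph{on $K$}, so the correct normalization is the weighted one, $\max_K\bigl(\|s\|_{h_n}e^{-nq}\bigr)\le 1$ (the paper's $\Phi_n$). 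The same slip poisons your norm comparison: the inner product is $\|s\|_n^2=\int_K\|s\|_{h_n}^2e^{-2nq}\,d\mu$, and against the unweighted sup-norm the comparison constant becomes of size $e^{-2n\min_Kq}$, exponentially large when $q$ is negative somewhere on $K$, which destroys the $O(\log n/n)$ rate; with the weighted sup over $K$ your computation goes through verbatim with constant $\mu(K)$. (The $M_n$ versus $M_n^{1/2}$ discrepancy in your upper bound is harmless since both are polynomial in $n$.)

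In the crux step your plan — Ohsawa--Takegoshi or H\"ormander with the weight $e^{-2nV}$ after regularizing $V$ — is viable but differs in execution from the paper, which never regularizes $V$ at all: it selects a smooth near-extremal competitor $\varphi\in PSH(X,\omega)$ with $\sup_K(\varphi-q)<0$ and $\varphi(x_0)>V(x_0)-\frac1n$, and forces vanishing of the $\bar\partial$-correction at $x_0$ not by an extension theorem but by inserting a fixed auxiliary singular metric $h_2$ of $L^{N_2}$ with Lelong number $\nu(h_2,x_0)\ge m$ into H\"ormander's estimate (non-integrability of $e^{-2\phi_{2\alpha}}$ at $x_0$ gives $f(x_0)=0$), using a cutoff at scale $n^{-l}$ controlled by the Lipschitz property of the metric, and then passes from the $L^2$ bound to the weighted sup on $K$ via the mean-value inequality. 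Note also that in this paper $V$ is only \emph{assumed continuous} — your appeal to $C^{1,1}$ regularity is not available in this generality — so your regularization of $V$ would need to be quantified with care, and the near-extremal-competitor device is precisely how the paper sidesteps any regularity of $V$ beyond continuity; since only the value at the single point $x_0$ matters, approximating $V$ from below by a member of the defining family in (1) costs just $\frac1n$, which is absorbed in the rate. With the normalization repaired and the crux executed along either your line or the paper's, the outline does yield the theorem.
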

\begin{remark}
Note that the Bergman kernel in the theorem is different from the classical one.
It depends on the choice of inner products. Different inner products induce different completion spaces of $H^{0}(X, L^{n})$.
\end{remark}

The paper is organized as follows. In Section 2 we recall the notion of holomorphic line bundles. In Section 3 we introduce the extremal functions. We consider the Bernstein-Markov inequality of special type. Then we define new inner products which induce the weighted Bergman kernels. The H\"ormander's $L^{2}$-estimate for $\bar\partial$ is also mentioned. We conclude Section 4 with the proof of the main theorem by two steps.

\section{Holomorphic line bundles}
In this section, we introduce basic notions of holomorphic line bundles.
Let $L\rightarrow X$ be a holomorphic line bundle over a complex compact K\"ahler manifold $X$. The complex dimension of $X$ is $m$. Let $\pi:L\rightarrow X$ be the projection map.
There exist local trivializations of $L$ on an open cover $\{U_{\alpha}\}$ of $X$.
The biholomorphisms are $\Psi_{\alpha}: \pi^{-1}(U_{\alpha})\rightarrow U_{\alpha}\times\mathbb{C}$,
which send $\pi^{-1}(x)$ isomorphically onto $\{x\}\times \mathbb{C}$. The transition functions are defined by the formula
\begin{equation*}
g_{\alpha\beta}=\Psi_{\alpha}\circ\Psi_{\beta}^{-1} \quad\text{on}\quad
U_{\alpha\beta}:=U_{\alpha}\cap U_{\beta}.
\end{equation*}
The $g_{\alpha\beta}$ are non-where vanishing holomorphic functions on $U_{\alpha\beta}$. The C\v{e}ch cohomology class of $\{g_{\alpha\beta}\}$ defines the first Chern class $c_{1}(L)$ of $L$. 

Denote by $H^{0}(X, L)$ the space of all global holomorphic sections of $L$. Let $h$ be a Hermitian metric of $L$, $e_{\alpha}$ the local frame of $L$ on $U_{\alpha}$. For a section $s\in H^{0}(X, L)$, we write $s=s_{\alpha}e_{\alpha}$
on $U_{\alpha}$. Then we can say equally that $s$ is a collection of holomorphic functions $s_{\alpha}$ on $U_{\alpha}$ which subject to the compatibility condition $s_{\alpha}=g_{\alpha\beta}s_{\beta}$ on $U_{\alpha\beta}$.
Set $h(e_{\alpha}, e_{\alpha})=e^{-2\phi_{\alpha}}, \|e_{\alpha}\|_{h}=e^{-\phi_{\alpha}}$, where $\phi_{\alpha}\in L^{1}(U_{\alpha})$.
Then we can say equally that $h$ is a collection of functions $\phi_{\alpha}\in L^{1}(U_{\alpha})$ with the conditions
\begin{equation*}
\phi_{\alpha}=\phi_{\beta}+\log |g_{\alpha\beta}| \quad\text{on}\quad U_{\alpha\beta}.
\end{equation*}
\begin{definition}
A line bundle $L$ is said to be Lipschitz if all $ \phi_{\alpha} $ are Lipschitz functions.
\end{definition}
Positive line bundles are always Lipschitz since they admit smooth Hermitian metrics.
The curvature current
\begin{equation*}
c_{1}(h)=-dd^{c}\log\|e_{\alpha}\|_{h}=dd^{c}\phi_{\alpha}
\end{equation*}
represents the first Chern class $ c_{1}(L)$.
Here $ d^{c}=\frac{i}{2\pi} (\bar\partial-\partial)$.
The line bundle is called $positive$ if $c_{1}(h)$ is strictly positive. In this case we can take $\omega=c_{1}(h)$
to be the K\"ahler form of $X$. The Kodaira embedding theorem implies that $X$ is a projective manifold.
Let $L^{n}$ be the $n$th tensor product of $L$ with the natural metric $h_{n}:=h^{\otimes n}$ induced by $h$.

\section{Extremal functions and weighted Bergman kernels}
In this section, we will introduce the extremal functions and define weighted Bergman kernels.
From now on, we assume $(L, h)\rightarrow (X, \omega)$ is a positive line bundle over a projective manifold $X$ of dimension $m$, where $h$ is a smooth Hermitian metric such that $\omega=c_{1}(h)$ is the K\"ahler form of $X$.

Recall that a $quasi-plurisubharmonic$ (q.p.s.h.) function is an upper semi-continuous (usc) function which is locally the difference of a p.s.h. function and a smooth one. Any q.p.s.h. functions $\varphi$ on $X$ satisfies 
\begin{equation*}
dd^{c}\varphi+c\omega\geq 0
\end{equation*}
for some $c\geq 0$.
Denote by $PSH(X, \omega)$ the set of all q.p.s.h. functions $\varphi$ with $dd^{c}\varphi+\omega\geq 0$.
A subset $K\subset X$ is called $pluripolar$ if there exists a p.s.h. function $\varphi$ such that $K\subset\{\varphi=-\infty\}$.

Now we consider a non-pluripolar compact subset $K\subset X$
and a continuous function $q: K\rightarrow\mathbb{R}$.
Then the $extremal$  $function$ is defined to be the usc regularization of the function
\begin{equation}
V_{K,q}(x):=\sup\{\varphi(x)\in PSH(X,\omega):\varphi\leq q \quad \text{on} \quad K\}.
\end{equation}
This generalizes the notion of classical Siciak's extremal functions \cite{sj}.
Throughout this paper, we assume that $V_{K,q}$ is continuous.
The paper \cite{bb} gave a local regularity condition to make $V_{K,q}$ continuous. Then $V_{K,q}$ is equal to its usc regularization.
It follows from \cite{gz} that $V_{K,q}\in PSH(X, \omega)$.
So $\omega+dd^{c}V_{K,q}$ is a positive closed $(1,1)$-current which represents $c_{1}(L)$.
We write $V$ for $V_{K,q}$ for simplicity.

Recall that the Bergman kernel for $H^{0}(X, L)$ is the Schwartz kernel of the orthogonal projection from the space of global $L^{2}$-sections of $L$ onto $H^{0}(X, L)$. A natural inner product on $H^{0}(X, L)$ is defined by the following formula
\begin{equation*}
\langle s_{1}, s_{2}\rangle:=\int_{X}h_{n}(s_{1},s_{2})\omega^{m}, \quad s_{1}, s_{2}\in H^{0}(X, L^{n}).
\end{equation*}
We choose an orthonormal basis with respect to the inner product. Then we obtain the Bergman kernel for $ H^{0}(X, L^{n}) $. Note that the space of $ L^{2} $-sections of $L^{n}$ depends on the definition of inner products on $ H^{0}(X, L^{n}) $. Hence the Bergman kernel depends also on the definition.

Now we define a new inner product on $ H^{0}(X, L^{n}) $ which yields a weighted Bergman kernel. Let $ K, q $ be as before. Consider a positive measure $\mu$ with support on $ K $.
Note that $\mu(K)>0$.
\begin{definition}
The triple $(K,q,\mu)$ satisfies the Bernstein-Markov inequality of special type if we have 
\begin{equation*}
\max_{K}(h_{n}(s(x),s(x))e^{-2nq(x)})\leq M_{n}
\int_{K}h_{n}(s(x),s(x))e^{-2nq(x)}d\mu(x)
\end{equation*}
for all $s\in H^{0}(X, L^{n})$, where $M_{n}\leq n^{C_{0}}$
for some universal constant $C_{0}>0$.
\end{definition}
By the above Bernstein-Markov inequality, we have the following well-defined inner product on $ H^{0}(X, L^{n}) $
\begin{equation}
\langle s_{1}, s_{2}\rangle:=\int_{K}h_{n}(s_{1}(x),s_{2}(x))e^{-2nq(x)}d\mu(x).
\end{equation}
Note that if $\langle s,s\rangle=0$, then $h_{n}(s(x),s(x))=0$ on $K$. The identity theorem implies that $s$ is the zero section.
Now we choose an orthonormal basis $\{S_{nj}\}_{j=1}^{d_{n}}$
with respect to the above inner product. The weighted Bergman kernel function for 
$ H^{0}(X, L^{n}) $ is given by 
\begin{equation}
B_{n}(x):=\sum_{j=1}^{d_{n}}\|S_{nj}(x)\|_{h_{n}}^{2}.
\end{equation}
Here $d_{n}=\dim H^{0}(X, L^{n}), \|s(x)\|_{h_{n}}^{2}=h_{n}(s(x),s(x))$.
It is well-known \cite{mm} that 
\begin{equation*}
d_{n}=O(n^{m}).
\end{equation*}

At the end of this section, we recall the H\"ormander's $L^{2}$-estimate for $\bar\partial$ \cite{dj}.
\begin{theorem}
Let $(L,h)\rightarrow (X,\omega)$ be a singular Hermitian holomorphic line bundle over a complete K\"ahler manifold $X$ of dimension $m$. Let $K_{X}^{\star}$ be the dual of the canonical line bundle with the natural Hermitian metric $h_{0}$.
If there exists a continuous function $\lambda:X\rightarrow[0, \infty)$ such that $c_{1}(h)\geq \lambda\omega$, then for any form $g\in L_{m,1}^{2}(X, L, loc)$
satisfying 
\begin{equation*}
\bar\partial g=0, \quad \int_{X}\lambda^{-1}\|g\|^{2}\omega^{m}<\infty,
\end{equation*}
there exists $u\in L_{m,0}(X,L)$ with $\bar\partial u=g$ and
\begin{equation*}
\int_{X}\|u\|^{2}\omega^{m}\leq \int_{X}\lambda^{-1}\|g\|^{2}\omega^{m}.
\end{equation*}
If there exists $C>0$ such that 
\begin{equation*}
c_{1}(L,h)+c_{1}(K_{X}^{\star},h_{0})\geq C\omega,
\end{equation*}
then for any form $g\in L_{0,1}^{2}(X,L)$ with $\bar\partial g=0$, there exists $u\in L_{0,0}^{2}(X,L)$ satisfying
\begin{equation*}
\bar\partial u=g, \quad \int_{X}\|u\|^{2}\omega^{m}\leq \frac{1}{C}\int_{X}\|g\|^{2}\omega^{m}.
\end{equation*}
\end{theorem}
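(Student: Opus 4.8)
The plan is to treat this as the classical Hörmander--Andreotti--Vesentini--Demailly $L^{2}$-existence theorem and prove it by the Hilbert-space duality method, so that both displayed estimates reduce to a single a priori inequality furnished by the Bochner--Kodaira--Nakano identity. First I would set up the unbounded operators $T=\bar\partial\colon L^{2}_{m,0}(X,L)\to L^{2}_{m,1}(X,L)$ and $S=\bar\partial\colon L^{2}_{m,1}(X,L)\to L^{2}_{m,2}(X,L)$, both closed, densely defined, with $S\circ T=0$. Given $g$ with $\bar\partial g=0$, i.e.\ $g\in\ker S$, the existence of $u$ solving $Tu=g$ with $\|u\|^{2}\le C$ is, by Hörmander's solvability lemma, equivalent to the dual bound $|\langle g,v\rangle|^{2}\le C\,\|T^{*}v\|^{2}$ for all $v\in\mathrm{Dom}(T^{*})$. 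Splitting $v=v_{1}+v_{2}$ orthogonally with $v_{1}\in\ker S$ and $v_{2}\in(\ker S)^{\perp}\subset(\mathrm{Im}\,T)^{\perp}=\ker T^{*}$, one has $T^{*}v=T^{*}v_{1}$ and $\langle g,v\rangle=\langle g,v_{1}\rangle$, so it suffices to verify the estimate for $v\in\ker S\cap\mathrm{Dom}(T^{*})$.

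The analytic core is this restricted estimate. For a smooth compactly supported $(m,1)$-form $v$ with values in $L$, the Bochner--Kodaira--Nakano inequality gives
\[
\int_{X}\langle[c_{1}(h),\Lambda]v,v\rangle\,\omega^{m}\le\|\bar\partial v\|^{2}+\|\bar\partial^{*}v\|^{2}=\|Sv\|^{2}+\|T^{*}v\|^{2}.
\]
The decisive point, special to top holomorphic degree, is the pointwise bound $\langle[c_{1}(h),\Lambda]v,v\rangle\ge\lambda|v|^{2}$ when $c_{1}(h)\ge\lambda\omega$: on an $(m,1)$-form the curvature operator acts diagonally by the eigenvalues $\gamma_{1}\le\cdots\le\gamma_{m}$ of $c_{1}(h)$ relative to $\omega$, each of which is $\ge\lambda$. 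Restricting to $v\in\ker S$ annihilates $\|Sv\|^{2}$ and leaves $\int_{X}\lambda|v|^{2}\,\omega^{m}\le\|T^{*}v\|^{2}$. Cauchy--Schwarz then yields
\[
|\langle g,v\rangle|\le\int_{X}\lambda^{-1/2}|g|\cdot\lambda^{1/2}|v|\,\omega^{m}\le\Big(\int_{X}\lambda^{-1}\|g\|^{2}\omega^{m}\Big)^{1/2}\Big(\int_{X}\lambda|v|^{2}\omega^{m}\Big)^{1/2},
\]
so that $|\langle g,v\rangle|^{2}\le\big(\int_{X}\lambda^{-1}\|g\|^{2}\omega^{m}\big)\|T^{*}v\|^{2}$; taking $C=\int_{X}\lambda^{-1}\|g\|^{2}\omega^{m}$ in the solvability lemma gives the first assertion.

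The hard part will be passing the a priori inequality from smooth compactly supported forms to all $v\in\mathrm{Dom}(T^{*})\cap\mathrm{Dom}(S)$, which is precisely where completeness of $(X,\omega)$ is needed. Here I would run the Andreotti--Vesentini/Gaffney density argument: choose cutoff functions $\chi_{\nu}\uparrow1$ with $|d\chi_{\nu}|\to0$ uniformly (available on a complete Kähler manifold), check that $\chi_{\nu}v$ lies in the relevant domains and converges to $v$ in graph norm, and control the commutator $[\bar\partial^{*},\chi_{\nu}]$ by $|d\chi_{\nu}|$. For the singular metric $h$ I would regularize by a decreasing family of smooth metrics, passing to the limit by monotone convergence while preserving the curvature lower bound. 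Finally, the second assertion follows by applying the first part to the bundle $L\otimes K_{X}^{\star}$ with metric $h\otimes h_{0}$ and constant $\lambda\equiv C$: since $(m,0)$- and $(m,1)$-forms with values in $L\otimes K_{X}^{\star}$ are canonically identified (compatibly with $\bar\partial$ and with the $L^{2}$ norms) with $(0,0)$- and $(0,1)$-forms with values in $L$, and $c_{1}(L,h)+c_{1}(K_{X}^{\star},h_{0})\ge C\omega$, the first estimate transcribes directly into $\int_{X}\|u\|^{2}\omega^{m}\le\frac{1}{C}\int_{X}\|g\|^{2}\omega^{m}$.
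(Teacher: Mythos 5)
This theorem appears in the paper only as a recalled result, quoted from Demailly's book \cite{dj} with no proof supplied, so there is no internal argument to compare against; the natural benchmark is the standard proof in the cited source, and your proposal reproduces essentially that proof: the Hörmander duality reduction (solvability of $Tu=g$ with norm control equivalent to $|\langle g,v\rangle|^{2}\le C\|T^{*}v\|^{2}$, checked only on $\ker S\cap\mathrm{Dom}(T^{*})$ after the orthogonal splitting $v=v_{1}+v_{2}$ with $(\ker S)^{\perp}\subset(\mathrm{Im}\,T)^{\perp}=\ker T^{*}$), the Bochner--Kodaira--Nakano a priori inequality, the pointwise eigenvalue bound $\langle[c_{1}(h),\Lambda]v,v\rangle\ge\lambda|v|^{2}$ in bidegree $(m,1)$, the Cauchy--Schwarz step, the Gaffney/Andreotti--Vesentini cutoff argument using completeness, and the twist by $K_{X}^{\star}$ identifying $(m,q)$-forms valued in $L\otimes K_{X}^{\star}$ with $(0,q)$-forms valued in $L$ for the second assertion. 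All of these steps are correct as sketched.

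The one place where your outline glosses over a genuine difficulty is the sentence about regularizing the singular metric ``by a decreasing family of smooth metrics \dots while preserving the curvature lower bound.'' On a general complete K\"ahler manifold no such global approximation exists: Demailly-type regularization of quasi-psh weights in general loses positivity, and this loss is unavoidable. The cited proof does not regularize the metric globally; it convolves the local weights $\phi_{\alpha}$ in Stein coordinate charts, where decreasing smooth psh approximations do exist, and combines this with an exhaustion argument (weakly pseudoconvex exhaustions, resp.\ the complete metrics $\omega_{\varepsilon}=\omega+\varepsilon\hat\omega$), passing to the limit by monotone convergence. Your sketch becomes a complete proof once the global regularization is replaced by this local one --- and in the only situation where the present paper actually invokes the theorem, the weight $\psi_{n}$ is smooth away from the single point $x_{0}$ with a logarithmic pole there, so the approximation is elementary. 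A minor final point: since $\lambda$ is allowed to vanish, the hypothesis $\int_{X}\lambda^{-1}\|g\|^{2}\omega^{m}<\infty$ should be read as forcing $g=0$ a.e.\ on $\{\lambda=0\}$; with that convention your Cauchy--Schwarz step remains valid.
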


\section{Proof of the main theorem}
In this section the proof of the main theorem is divided into two steps. Set
\begin{equation*}
\Phi_{n}(x):=\sup \{\|s(x)\|_{h_{n}}: s\in H^{0}(X, L^{n}),\quad
\max_{K}(\|s(x)\|_{h_{n}}e^{-nq(x)})\leq 1\}.
\end{equation*}

Recall that the positive measure $\mu$ is in Definition 3.1.
First we prove the following 
\begin{proposition}
With the above notations and assumptions, we have the uniform estimate on $X$
\begin{equation*}
\frac{1}{\mu(K)d_{n}}\leq \frac{B_{n}(x)}{\Phi_{n}^{2}(x)}\leq n^{C_{0}}d_{n}. 
\end{equation*}
In particular, 
\begin{equation*}
|\frac{1}{2n}\log B_{n}(x)-\frac{1}{n}\log \Phi_{n}|=O(\frac{\log n}{n}).
\end{equation*}
\end{proposition}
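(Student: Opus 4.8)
The plan is to compare the weighted $L^2$-norm with the weighted sup-norm on $K$, and then to transfer both estimates to the Bergman kernel through its reproducing property. For $s\in H^{0}(X,L^{n})$ write $\|s\|_{L^2}^{2}:=\langle s,s\rangle$ and $\|s\|_{K}:=\max_{x\in K}(\|s(x)\|_{h_{n}}e^{-nq(x)})$. Two elementary comparisons are available: bounding the integrand by its maximum gives $\|s\|_{L^2}^{2}\le\mu(K)\|s\|_{K}^{2}$, while the Bernstein-Markov inequality of Definition 3.1 gives the reverse $\|s\|_{K}^{2}\le M_{n}\|s\|_{L^2}^{2}$ with $M_{n}\le n^{C_{0}}$. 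By the homogeneity built into its definition, $\Phi_{n}(x)=\sup_{s\ne0}\|s(x)\|_{h_{n}}/\|s\|_{K}$, and this supremum is attained since $H^{0}(X,L^{n})$ is finite dimensional and $\{\|s\|_{K}\le1\}$ is compact.

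For the upper bound I would argue term by term on the orthonormal basis $\{S_{nj}\}$. Each $S_{nj}$ satisfies $\|S_{nj}\|_{L^2}=1$, so Bernstein-Markov yields $\|S_{nj}\|_{K}^{2}\le M_{n}$; inserting $S_{nj}/\|S_{nj}\|_{K}$ into the defining supremum of $\Phi_{n}$ gives $\|S_{nj}(x)\|_{h_{n}}^{2}\le\|S_{nj}\|_{K}^{2}\,\Phi_{n}^{2}(x)\le M_{n}\Phi_{n}^{2}(x)$. Summing over the $d_{n}$ basis elements produces $B_{n}(x)\le M_{n}d_{n}\Phi_{n}^{2}(x)\le n^{C_{0}}d_{n}\Phi_{n}^{2}(x)$, which is the right-hand inequality.

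For the lower bound I would use the extremal/reproducing property of the kernel. Choose a section $s^{\ast}$ realising $\Phi_{n}(x)$, so $\|s^{\ast}\|_{K}\le1$ and $\|s^{\ast}(x)\|_{h_{n}}=\Phi_{n}(x)$, and expand $s^{\ast}=\sum_{j}a_{j}S_{nj}$, whence $\|s^{\ast}\|_{L^2}^{2}=\sum_{j}|a_{j}|^{2}\le\mu(K)$ by the first comparison. A Cauchy-Schwarz estimate in the local frame gives $\|s^{\ast}(x)\|_{h_{n}}^{2}\le\big(\sum_{j}|a_{j}|^{2}\big)\big(\sum_{j}\|S_{nj}(x)\|_{h_{n}}^{2}\big)=\|s^{\ast}\|_{L^2}^{2}\,B_{n}(x)\le\mu(K)B_{n}(x)$, so $\Phi_{n}^{2}(x)\le\mu(K)B_{n}(x)$. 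Since $d_{n}\ge1$ this is even stronger than the stated bound $1/(\mu(K)d_{n})\le B_{n}(x)/\Phi_{n}^{2}(x)$.

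Finally, taking $\tfrac{1}{2n}\log$ of the two-sided estimate turns it into $\big|\tfrac{1}{2n}\log B_{n}(x)-\tfrac1n\log\Phi_{n}(x)\big|\le\tfrac{1}{2n}\max\{\log(n^{C_{0}}d_{n}),\log(\mu(K)d_{n})\}$, and the bound $d_{n}=O(n^{m})$ makes the right-hand side $O(\log n/n)$, uniformly in $x$ because none of the constants depend on the point. The argument is in essence a clean two-norm comparison; the only genuine input is the Bernstein-Markov inequality, which supplies the otherwise-missing lower bound of the $L^2$-norm by the sup-norm. The main thing to get right is therefore the direction and the $x$-uniformity of that estimate (the constancy of $M_{n}\le n^{C_{0}}$), together with the reproducing inequality for $B_{n}$; no deeper obstacle is expected.
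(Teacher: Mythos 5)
Your proof is correct, and the upper bound is exactly the paper's argument (Bernstein--Markov applied to each orthonormal basis element $S_{nj}$, then summed). Where you diverge is the lower bound. The paper starts from the reproducing identity $s(x)=\int_{K}h_{n}(y)(S_{n}(x,y),s(y))e^{-2nq(y)}d\mu(y)$, expands the kernel in the basis, bounds $\|s(y)\|_{h_{n}}e^{-nq(y)}\leq 1$ inside each coefficient integral, and then applies Cauchy--Schwarz twice (once over the index $j$, once over the integral), picking up a factor $d_{n}$ from the sum $\sum_{j}\int_{K}\|S_{j}\|_{h_{n}}^{2}e^{-2nq}d\mu=d_{n}$; this gives $\Phi_{n}^{2}(x)\leq\mu(K)d_{n}B_{n}(x)$. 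You instead use the variational characterization of the Bergman kernel: expand a competitor $s=\sum_{j}a_{j}S_{nj}$, note $\sum_{j}|a_{j}|^{2}=\|s\|_{L^{2}}^{2}\leq\mu(K)\|s\|_{K}^{2}\leq\mu(K)$ by the trivial comparison of the integral with the sup, and conclude $\|s(x)\|_{h_{n}}^{2}\leq\|s\|_{L^{2}}^{2}B_{n}(x)$ by a single pointwise Cauchy--Schwarz. This is cleaner and strictly sharper: you get $\Phi_{n}^{2}(x)\leq\mu(K)B_{n}(x)$ with no $d_{n}$ factor, because the paper's term-by-term estimate of the coefficient integrals discards the orthogonality that your Parseval step exploits. (Your appeal to attainment of the supremum is harmless but unnecessary --- the estimate holds for every competitor $s$ with $\|s\|_{K}\leq 1$, so it passes to the sup directly; and in the final logarithmic step you should take $|\log(\mu(K)d_{n})|$ to cover $\mu(K)d_{n}<1$, a cosmetic point.) Since both versions give $|\frac{1}{2n}\log B_{n}-\frac{1}{n}\log\Phi_{n}|=O(\frac{\log n}{n})$ via $d_{n}=O(n^{m})$, the sharpening does not change the theorem, but it does isolate more clearly that the only nontrivial input is the Bernstein--Markov inequality, which is needed solely for the upper bound.
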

\begin{proof}
Recall that $\{S_{nj}\}_{j=1}^{d_{n}}$ is the orthonormal basis of
$H^{0}(X, L^{n})$. We write $\{S_{j}\}$ for short.
The Bernstein-Markov inequality yields
\begin{equation*}
\max_{K}\|S_{j}\|_{h_{n}}\leq M_{n}^{\frac{1}{2}}.
\end{equation*}
Then
\begin{equation*}
\|S_{j}\|_{h_{n}}\leq M_{n}^{\frac{1}{2}}\Phi_{n}.
\end{equation*}
Hence 
\begin{equation*}
B_{n}=\sum_{j=1}^{d_{n}}\|S_{j}\|_{h_{n}}^{2}\leq M_{n}d_{n}\Phi_{n}^{2}\leq n^{C_{0}}d_{n}\Phi_{n}^{2}.
\end{equation*}

For the left inequality, we consider any section $s\in H^{0}(X, L^{n})$ satisfying $\max_{K}(\|s(x)\|_{h_{n}}e^{-nq(x)})\leq 1$.
Let $S_{n}(x,y)$ be the Bergman kernel.
Then 
\begin{equation*}
\begin{split}
\|s(x)\|_{h_{n}}^{2}&=\|\int_{K}h_{n}(y)(S_{n}(x,y), s(y))e^{-2nq(y)}d\mu(y)\|_{h_{n}(x)}^{2} \\
&=\| \sum_{j=1}^{d_{n}}\int_{K} h_{n}(y)(S_{j}(y), s(y))e^{-2nq(y)}d\mu(y) S_{j} (x)\|_{h_{n}(x)}^{2} \\
&\leq \| \sum_{j=1}^{d_{n}}\int_{K} \|S_{j}(y)\|_{h_{n}} \|s(y)\|_{h_{n}}e^{-2nq(y)}d\mu(y) S_{j} (x)\|_{h_{n}(x)}^{2} \\
&\leq \| \sum_{j=1}^{d_{n}}\int_{K} \|S_{j}(y)\|_{h_{n}} e^{-nq(y)}d\mu(y) S_{j} (x)\|_{h_{n}(x)}^{2} \\
&\leq \sum_{j=1}^{d_{n}}| \int_{K} \|S_{j}(y)\|_{h_{n}} e^{-nq(y)}d\mu(y) |^{2}\sum_{j=1}^{d_{n}}\|S_{j}\|_{h_{n}}^{2}\\
&\leq \mu(K)\sum_{j=1}^{d_{n}}\int_{K} \|S_{j}(y)\|_{h_{n}}^{2} e^{-2nq(y)}d\mu(y)B_{n}(x)\\
&=\mu(K)d_{n}B_{n}(x). \\
\end{split}
\end{equation*}
So $\Phi_{n}(x)^{2}\leq \mu(K)d_{n}B_{n}(x)$.
The proof is completed by using the fact that $d_{n}=O(n^{m})$.
\end{proof}

Now we prove the main theorem. Our proof is based on the original idea by Demailly \cite{dj1}.
\begin{proof}
By the above proposition, it suffices to show that
\begin{equation*}
|\frac{1}{n}\log \Phi_{n}-V|=O(\frac{\log n}{n})
\end{equation*}
uniformly on $X$.
Recall that $V:=V_{K,q}$ is defined in (1).

Let $\varphi:=\frac{1}{n}\log \|s\|_{h_{n}}$ for $s\in H^{0}(X, L^{n}) $.
Since 
\begin{equation*}
\max_{K}(\|s(x)\|_{h_{n}}e^{-nq(x)})\leq 1,
\end{equation*}
we deduce that
\begin{equation*}
\varphi\leq q \quad\text{on}\quad K.
\end{equation*}
Locally $s(x)=s_{\alpha}(x)e_{\alpha}^{n}$, then
\begin{equation*}
dd^{c}\varphi=\frac{1}{n}dd^{c}\log |s_{\alpha}|-dd^{c}\phi_{\alpha}.
\end{equation*}
So
\begin{equation*}
dd^{c}\varphi+\omega\geq 0.
\end{equation*}
By the definition of $V(x)$, $\varphi(x)\leq V(x)$.

Fix $x_{0}\in X$. Let 
\begin{equation*}
a=V(x_{0})-\frac{1}{n}<V(x_{0}).
\end{equation*}
By regularizations and translations, we can assume without loss of generality,
\begin{equation*}
\begin{split}
&\varphi\in PSH(X,\omega)\cap C^{\infty}(X), \\
&\sup_{K}(\varphi-q)<0,\\
&\varphi(x_{0})>V(x_{0})-\frac{1}{n}.
\end{split}
\end{equation*}
Denote by $B(x_{0},r)$ the open ball centered at $x_{0}$ with radius $r$. Since $(L,h)$ is Lipschitz,
we choose $r=\frac{1}{n^{l}}$ for some $l\geq 1$
such that
\begin{equation*}
\varphi(x)>V(x_{0})-\frac{1}{n} \quad\text{in}\quad B(x_{0},r)
\end{equation*}
and
\begin{equation*}
\sup_{x,y\in B(x_{0},r)}|h(x)-h(y)|\leq\tilde C\frac{1}{n^{l}}.
\end{equation*}
Here without confusions, we write locally $h$ for $\phi_{\alpha}$.

Let $\chi$ be a cut-off function with support in $B:=B(x_{0},\frac{1}{n^{l}})$ and $\chi\equiv 1$ on $B(x_{0},\frac{1}{2n^{l}})$ with $B\subset U_{\alpha}$.
Without loss of generality, we assume 
\begin{equation*}
B\cap U_{\beta}=\varnothing, \quad \forall\beta\neq\alpha.
\end{equation*}
Then $\chi e_{\alpha}^{n}$ is a global section of $L^{n}$ for all $n\geq 1$.

Recall that $K_{X}$ is the canonical line bundle of $X$.
There exists an integer $N_{1}$ such that $L^{N_{1}}\otimes K_{X}^{\star}$ is positive. Let $h_{1}=\{\phi_{1\alpha}\}$ be the smooth Hermitian metric of $L^{N_{1}}\otimes K_{X}^{\star}$.
Let $h_{2}=\{\phi_{2\alpha}\}$ be a singular Hermitian metric of $L^{N_{2}}$ for some $N_{2}>0$. Moreover, $h_{2}$ is smooth in $X\setminus{x_{0}}$ and its Lelong number $\nu(h_{2}, x_{0})\geq m$. That is 
\begin{equation*}
\liminf_{x\rightarrow x_{0}}\frac{h_{2}}{\log |x-x_{0}|}\geq m.
\end{equation*}
So near $x_{0}$ we have
\begin{equation*}
\|e_{\alpha}\|_{h_{2}}=e^{-h_{2}(x)}\gtrsim |x-x_{0}|^{-m}.
\end{equation*}
The choice of $h_{2}$ is possible when $N_{2}$ is large, since $L$ is positive. 

The section $\bar\partial\chi e_{\alpha}^{n}$ can be regarded as a $\bar\partial$-closed $(m-1)$-form with values in $L^{n}\otimes K_{X}^{\star}$. Note that the curvature form of $K_{X}^{\star}$ is 
\begin{equation*}
Ric(\omega)=-\partial\bar\partial\log \det\omega ,
\end{equation*}
which is smooth.
Let
\begin{equation*}
\psi_{n}:=(n-N_{1}-N_{2})(\varphi+\phi_{\alpha})+\phi_{1\alpha}+\phi_{2\alpha}.
\end{equation*}
Since
\begin{equation*}
dd^{c}\psi_{n}\geq dd^{c}\phi_{1\alpha}\geq\varepsilon_{0}\omega 
\end{equation*}
for some universal constant $\varepsilon_{0}>0$,
so we can apply H\"ormander's $L^{2}$-estimate theorem to the case with the metric $\psi_{n}$.
Hence there exists a smooth section $fe_{\alpha}^{n}$ such that
$\bar\partial f=\bar\partial\chi$ with the following estimate
\begin{equation*}
\begin{split}
&\int_{X}|f|^{2}e^{-2(n-N_{1}-N_{2})(\varphi+\phi_{\alpha})-2\phi_{1\alpha}-2\phi_{2\alpha}}dV_{\omega}\\
&\leq\frac{1}{\varepsilon_{0}}\int_{X}\|\bar\partial\chi\|^{2}e^{-2\psi_{n}}dV_{\omega}.
\end{split}
\end{equation*}
Since $\supp\bar\partial\chi\subset B\setminus B(x_{0},\frac{1}{2n^{l}})$,
$\psi_{n}$ is smooth in $B(x_{0},\frac{1}{2n^{l}})$.
We deduce that 
\begin{equation*}
\int_{X}\|\bar\partial\chi\|^{2}e^{-2\psi_{n}}dV_{\omega}<\infty.
\end{equation*}
Recall that the Lelong number of $\phi_{2\alpha}$ at $x_{0}$
satisfies $\nu(\phi_{2\alpha},x_{0})\geq m=\dim_{\mathbb{C}}X$, i.e. $e^{-2\phi_{2\alpha}(x)}\gtrsim |x-x_{0}|^{-2m}$ near $x_{0}$. But $|x-x_{0}|^{-2m}$ is not integrable near $x_{0}$, we must have $f(x_{0})=0$.

On the open ball $B$, $-\varphi<-V(x_{0})+\frac{1}{n}, -\phi_{\alpha}(x)\leq -\phi_{\alpha}(x_{0})+\frac{\tilde C}{n^{l}}$.
So
\begin{equation*}
\begin{split}
&-2n(\varphi+\phi_{\alpha})\\
&\leq 2n(-V(x_{0})+\frac{1}{n}-\phi_{\alpha}(x_{0})+\frac{\tilde C}{n^{l}})\\
&=-2nV(x_{0})+2-2n\phi_{\alpha}(x_{0})+\frac{2\tilde C}{n^{l-1}}.
\end{split}
\end{equation*}
There exists $C_{1}$ independent of $n$ such that
\begin{equation*}
\|\bar\partial\chi\|^{2}\leq C_{1}n^{2l} \quad\text{on}\quad B.
\end{equation*}
Hence
\begin{equation*}
\begin{split}
&\int_{X}\|\bar\partial\chi\|^{2}e^{-2\psi_{n}}dV_{\omega}\\
&\leq C_{2}n^{2l}e^{-2n(V_{x_{0}}+\phi_{\alpha}(x_{0}))}.\\
\end{split}
\end{equation*}
Set
\begin{equation*}
s_{\alpha}=se_{\alpha}^{n}=(\chi-f)e_{\alpha}^{n}\in H^{0}(X,L^{n}).
\end{equation*}
Note that $s(x_{0})=1$ and $s$ is a non-zero smooth section of $L^{n}$. Moreover, we have
\begin{equation*}
\begin{split}
&\int_{X}|s|^{2}e^{-2n(\varphi+\phi_{\alpha})}dV_{\omega}\\
&\leq C_{3}n^{2l}e^{-2n(V(x_{0})+\phi_{\alpha}(x_{0}))}.\\
\end{split}
\end{equation*}
Since $\varphi<q$ on $K$, we apply the mean-value inequality to the subharmonic function $s$, $\forall x\in K$,
\begin{equation*}
\begin{split}
&|s|^{2}e^{-2n\phi_{\alpha}}e^{-2nq}\\
&\leq C_{\delta}\int_{B(x,\delta)}|s(y)|^{2}e^{-2n(\varphi(y)+\phi_{\alpha}(y))}e^{2n(\phi_{\alpha}(y)-\phi_{\alpha}(x)+q(y)-q(x))}e^{2n(\varphi(y)-q(y))}dV_{\omega}(y).\\
\end{split}
\end{equation*}
Note that
\begin{equation*}
\begin{split}
&\lim_{\delta\rightarrow 0}\sup_{B(x,\delta)}|\phi_{\alpha}(y)-\phi_{\alpha}(x)+q(y)-q(x)|=0\\
&\lim_{\delta\rightarrow 0}\sup_{B(x,\delta)}(\varphi-q)<0.\\
\end{split}
\end{equation*}
Then we have
\begin{equation*}
\begin{split}
&|s|^{2}e^{-2n\phi_{\alpha}}e^{-2nq}\\
&\leq C_{4}n^{2l}e^{-2n(V(x_{0})+\phi_{\alpha}(x_{0}))}.\\
\end{split}
\end{equation*}
Set
\begin{equation*}
S=C_{4}^{-\frac{1}{2}}n^{-l}e^{n(V(x_{0})+\phi_{\alpha}(x_{0}))}se_{\alpha}^{n}.
\end{equation*}
Then
\begin{equation*}
\|S\|_{h_{n}}^{2}e^{-2nq}\leq C_{4}^{-1}n^{-2l}e^{2n(V(x_{0})+\phi_{\alpha}(x_{0}))}
|s|^{2}e^{-2n\phi_{\alpha}}e^{-2nq}\leq 1.
\end{equation*}
\begin{equation*}
\begin{split}
&\frac{1}{n}\log \|S\|_{h_{n}}(x_{0})\\
&=\frac{1}{n}\log(C_{4}^{-\frac{1}{2}}n^{-l}e^{n(V(x_{0})+\phi_{\alpha}(x_{0}))}|s(x_{0}|e^{-n\phi_{\alpha}(x_{0})})\\
&=-\frac{\log C_{4}}{2n}-l\frac{\log n}{n}+V(x_{0}).\\
\end{split}
\end{equation*}
Since $X$ is compact, the set of all $l$ is bounded.
Hence
\begin{equation*}
0\leq V(x)-\frac{1}{n}\log \Phi_{n}\leq C_{5}\frac{\log n}{n}
\end{equation*}
for all $x\in X$.
Then the proof is completed.
\end{proof}

\noindent
G. SHAO,
Institute of Mathematics, Academia Sinica, Taipei 10617, Taiwan.
{\tt guokuan@gate.sinica.edu.tw}

\end{document}